\newtheorem{theorem}{Theorem}
\newtheorem{lemma}[theorem]{Lemma}
\newtheorem{example}[theorem]{Example}
\newtheorem{definition}[theorem]{Definition}
\title{Linear non-homogenous patterns and prime power generators in numerical semigroups associated to combinatorial configurations}
\author{Klara Stokes and Maria Bras-Amor\'os}
\begin{document}
\maketitle
\begin{abstract}
It is proved that the numerical semigroups associated to the combinatorial configurations satisfy a family of non-linear symmetric patterns. 
Also, these numerical semigroups are studied for two particular classes of combinatorial configurations.  
\end{abstract}
\section{Introduction}
In this article we will discuss some properties and examples of numerical semigroups associated to the existence of combinatorial configurations. 
The link between these two objects was presented in \cite{BrasStokes}. 
We will now introduce the concepts that will be used in the rest of this article.

\subsection{Combinatorial configurations}
An \emph{incidence structure} is a set of \emph{points} $\mathcal{P}$ and a set of \emph{lines} $\mathcal{L}$, together with an incidence relation between these two sets. 
If a point $p$ and a line $l$  are incident, then we say that $l$  \emph{goes through} $p$, that $p$ \emph{is on} $l$, and so on.   
We say that a pair of lines that goes through the same point $p$ \emph{meet} or \emph{intersect} in $p$.  

A \emph{combinatorial configuration} is an incidence structure in which there are $r$ lines through every point, $k$ points on every line and such that through any pair of points there is at most one line.  The last condition can be replaced by requiring any pair of lines to meet in at most one point. 
A general reference for combinatorial configurations is \cite{Gropp07} and the book \cite{Grunbaum} collects many results on combinatorial configurations, 
although it focuses on geometrically realizable configurations. 

We will use the notation \emph{$(v,b,r,k)$-configuration} to refer to a combinatorial configuration 
with $v$ points, $b$ lines, $r$ lines through every point and  $k$ points on every line. 
When $v$ and $b$ are not known or not important, then we use the notation \emph{$(r,k)$-configuration.}
We say that a combinatorial configuration is \emph{balanced} if $r=k$. This implies that $v=b$. 
Figure~\ref{fig:1} shows some examples of combinatorial configurations. 

\begin{figure}
\begin{tabular}{ccc}
\includegraphics[width=0.23\textwidth]{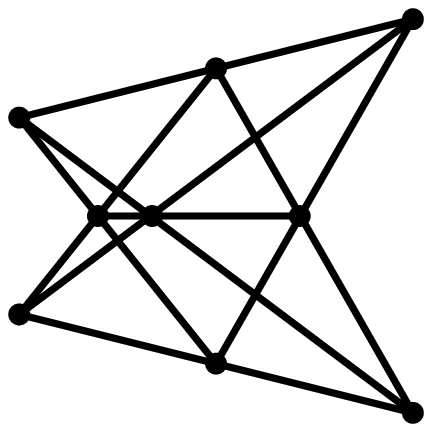}&
\includegraphics[width=0.26\textwidth]{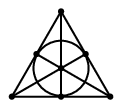}&
\includegraphics[width=0.31\textwidth]{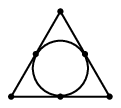}\\
Pappus' configuration&The Fano plane&A non-balanced \\
&&configuration\\
$(v,b,r,k)=(9,9,3,3)$&$(v,b,r,k)=(7,7,3,3)$&$(v,b,r,k)=(6,4,2,3)$
\end{tabular}
\caption{Examples of combinatorial configurations}
\label{fig:1}
\end{figure}
The following results are well-known. We include the simple proofs for the sake of completeness. 
\begin{lemma}\label{thm:neccond}
\begin{enumerate}
\item \label{neccond1} $v\geq r(k-1)+1$ and $b\geq k(r-1)+1$;
\item \label{neccond2} $vr=bk$. 
\end{enumerate}
\end{lemma}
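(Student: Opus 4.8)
The plan is to prove both parts by counting incident point–line pairs (flags) and by a local counting argument around a fixed point or line.

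For part~\ref{neccond2}, I would count the set of flags $\{(p,l) : p \in \mathcal{P},\ l \in \mathcal{L},\ p \text{ on } l\}$ in two ways. Summing over points, each of the $v$ points lies on exactly $r$ lines, giving $vr$ flags; summing over lines, each of the $b$ lines passes through exactly $k$ points, giving $bk$ flags. Equating the two counts yields $vr = bk$.

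For part~\ref{neccond1}, I would fix a point $p$ and consider the $r$ lines through it. On each such line there are $k-1$ points other than $p$, so this produces $r(k-1)$ points distinct from $p$. The key observation is that these points are pairwise distinct: if a point $q \neq p$ lay on two distinct lines $l_1, l_2$ through $p$, then $l_1$ and $l_2$ would both pass through the two distinct points $p$ and $q$, contradicting the hypothesis that through any pair of points there is at most one line. Hence $\mathcal{P}$ contains at least $r(k-1) + 1$ points, i.e.\ $v \geq r(k-1)+1$. The bound $b \geq k(r-1)+1$ follows by the dual argument: fix a line $l$, look at the $r-1$ lines other than $l$ through each of the $k$ points on $l$, and use the (equivalent) condition that two distinct lines meet in at most one point to conclude that all $k(r-1)$ of these lines are distinct and different from $l$.

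I do not expect a genuine obstacle here; the only point requiring care is the distinctness claim in part~\ref{neccond1}, where one must invoke the "at most one line through two points" axiom (respectively its dual) rather than treat the count of points on lines through $p$ as automatically without repetition.
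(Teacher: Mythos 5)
Your proposal is correct and follows essentially the same approach as the paper: double-counting flags for part~\ref{neccond2} and counting the points on the $r$ lines through a fixed point for part~\ref{neccond1}. You make explicit the distinctness argument (via the at-most-one-line-through-two-points axiom) that the paper's terse proof leaves implicit, which is a worthwhile clarification but not a different method.
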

\begin{proof}
\begin{enumerate}
\item Take a point $p$. There are $r$ lines through $p$ with $k-1$ more points, hence at least $r(k-1)+1$ points. The other inequality is proved analogously. 
\item There are $v$ points in $r$ incidence relations and $b$ lines in $k$ incidence relations. Since the incidence relation is symmetric we get $vr=bk$. 
\end{enumerate}
\end{proof}




A natural question to ask is for which  parameter sets do combinatorial $(v,b,r,k)$-configurations exist.
Actually, the four parameters $(v,b,r,k)$ are redundant, and we only need the three parameters
$$(d,r,k)$$ 
with $d=\frac{v\gcd(r,k)}{k}=\frac{b\gcd(r,k)}{r}\in \mathbb{Z}$. 
Indeed, we have seen that in a combinatorial configuration necessarily $vr=bk$.
Therefore the number of points $v$ and the number of lines $b$ is given by
$$v=\frac{bk}{r}=d\frac{k}{\gcd(r,k)}$$ 
and symmetrically
$$b=\frac{vr}{k}=d\frac{r}{\gcd(r,k)}.$$ 
We associate the integer $d$ to the configuration. 
If one prefers, one can also express this integer as $$d=\frac{vr}{\mbox{lcm}(r,k)}=\frac{bk}{\mbox{lcm}(r,k)}.$$

\begin{definition}
For $r,k\in\mathbb{N}$, $r,k\geq 2$ we define \medskip\\
$S_{(r,k)}:=\{d\in\mathbb{N}:\exists \mbox{ combinatorial }(v,b,r,k)\mbox{-configuration and }$\begin{flushright}
$~~~~~~~~~~~~~~~~~~~~~~~~~~~~~~~~~~~~~~~~~~~~~~v=d\frac{k}{\gcd(r,k)}, b=d\frac{r}{\gcd(r,k)}\}.$\end{flushright}
\end{definition}
The set $S_{(r,k)}$ is the object of study in this article. 

\subsection{Numerical semigroups}

A \emph{numerical semigroup} is a subset $S\subseteq\mathbb{N}\cup \{0\}$, 
such that $S$ is closed under addition,  $0\in S$ and the complement $(\mathbb{N}\cup\{0\})\setminus S$ is finite. 
The\textit{ gaps} of a numerical semigroup are the elements in the complement of the numerical semigroup and the \emph{genus} of a numerical semigroup is the number of gaps of the numerical semigroup. 
The \textit{multiplicity} of a numerical semigroup is its smallest non-zero element.
Every numerical semigroup has a minimal set of \emph{generators}. 
The \textit{conductor} of a numerical semigroup is the smallest element such that all subsequent natural numbers belong to the numerical semigroup. 
If the numerical semigroup is generated by the two elements $a$ and $b$, then the conductor $c$ is given by 
\begin{equation}
c=(a-1)(b-1).
\label{eqcond}
\end{equation} 
In general,  we do not have an explicit expression of the conductor in terms of the generators. 
However, the conductor can be bounded as a function of other properties of the numerical semigroup. 
For example, we have the following upper bound  in terms of the genus (see Lemma 2.14 in \cite{RoGa}). 

\begin{lemma}\label{1:fita:conductorboundgenus}
The genus $g$ and the conductor $c$ of a numerical semigroup always satisfy
$$2g\geq c.$$
\end{lemma}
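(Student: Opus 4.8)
The plan is to compare the set of gaps with the set of non-gaps that lie below the conductor, exploiting a symmetry between them. Write $c$ for the conductor and $g$ for the genus of the numerical semigroup $S$. Every gap is, by definition, smaller than $c$, so the gaps all lie in the interval $\{0,1,\dots,c-1\}$, which has exactly $c$ elements. This interval is partitioned into the $g$ gaps and the $c-g$ non-gaps of $S$ that are smaller than $c$. Hence proving $2g\geq c$ is equivalent to showing that the number of non-gaps below the conductor is at most the number of gaps, i.e. $c-g\leq g$.

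First I would set up an injection from the set of non-zero non-gaps below $c$ into the set of gaps. The natural candidate is the map sending a non-gap $s$ with $0<s<c$ to $c-1-s$. I need to check two things: that $c-1-s$ is genuinely a gap, and that the map is injective. Injectivity is immediate since $s\mapsto c-1-s$ is an involution on $\{0,\dots,c-1\}$. For the first point, suppose toward a contradiction that $c-1-s$ were a non-gap; then, since $s$ is also a non-gap and $S$ is closed under addition, $s+(c-1-s)=c-1$ would be a non-gap, contradicting the minimality of the conductor (which forces $c-1$ to be a gap). Therefore $c-1-s$ is a gap.

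This injection has image inside the gap set, but it omits at least one gap, namely $c-1$ itself (the preimage of $c-1$ under the map would be $0$, which is a non-gap but was excluded since we only took $s>0$; and $c-1$ is not the image of any positive non-gap because that would force $s=0$). So the map embeds the $(c-g-1)$-element set of positive non-gaps below $c$ into the $g$-element gap set, missing $c-1$, giving $c-g-1\leq g-1$, hence $c-g\leq g$, i.e. $2g\geq c$, as desired. The main thing to be careful about is the bookkeeping of which elements are included or excluded ($0$ is a non-gap but not counted among ``positive'' non-gaps, and $c-1$ is always a gap), and the edge case $S=\mathbb{N}\cup\{0\}$ where $c=0$ and $g=0$ and the inequality holds trivially; otherwise $c\geq 1$ and the argument above applies. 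I do not expect any serious obstacle here — the only subtlety is making sure the counting is tight enough to get the factor of $2$ rather than something weaker.
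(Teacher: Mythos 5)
Your proof is correct. The paper does not prove this lemma itself but simply cites Lemma 2.14 of the Rosales--Garc\'ia-S\'anchez book; your argument --- the injection $s\mapsto c-1-s$ from the positive non-gaps below the conductor into the gaps, using closure under addition and the fact that $c-1$ is a gap --- is precisely the standard proof, and your bookkeeping (excluding $0$, noting that $c-1$ is never in the image, and handling the trivial case $c=0$) is accurate.
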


\begin{example}
$$\langle 3,7\rangle=\{0,3,6,7,9,10,12,13,14,15,16,\dots\}$$ is the numerical semigroup generated by 3 and 7. 
In this numerical semigroup the multiplicity is 3, the conductor is 12 and the gaps are $\{1,2,4,5,8,11\}$, so that the genus is 6.
\end{example}
For a general reference on numerical semigroups see \cite{RoGa}. 
The link between numerical semigroups and combinatorial configurations is to be found in the following result from \cite{BrasStokes}. 
\begin{theorem}\label{main}
For every pair of integers $r,k\geq 2$, $S_{(r,k)}$ is a numerical semigroup.
\end{theorem}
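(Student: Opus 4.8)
The plan is to verify directly the three defining properties of a numerical semigroup for $S_{(r,k)}$: that it is closed under addition, that it contains $0$, and that its complement in $\mathbb{N}\cup\{0\}$ is finite. The membership $0\in S_{(r,k)}$ is a degenerate case (the empty configuration, with $v=b=0$), or can simply be adjoined by convention; I would dispose of it in one line. The heart of the argument is closure under addition, and for this the natural idea is a \emph{disjoint union} construction: given a $(v_1,b_1,r,k)$-configuration $C_1$ and a $(v_2,b_2,r,k)$-configuration $C_2$, form the incidence structure whose points are the disjoint union of the two point sets, whose lines are the disjoint union of the two line sets, and whose incidence relation is inherited componentwise (no point of $C_1$ lies on any line of $C_2$ and vice versa).

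The key steps, in order, are as follows. First, check that this disjoint union is again a combinatorial $(r,k)$-configuration: every point still lies on exactly $r$ lines and every line still contains exactly $k$ points, since these local counts are unchanged; and two points still determine at most one line, because two points in the same component behave as before and two points in different components lie on no common line. Second, read off the parameters: the union has $v_1+v_2$ points and $b_1+b_2$ lines, so if $d_1,d_2\in S_{(r,k)}$ correspond to $C_1,C_2$ via $v_i=d_i\frac{k}{\gcd(r,k)}$ and $b_i=d_i\frac{r}{\gcd(r,k)}$, then the union has $v_1+v_2=(d_1+d_2)\frac{k}{\gcd(r,k)}$ points and $b_1+b_2=(d_1+d_2)\frac{r}{\gcd(r,k)}$ lines, witnessing $d_1+d_2\in S_{(r,k)}$. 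This establishes closure. Third, to get cofiniteness of the complement, I would exhibit two coprime elements of $S_{(r,k)}$, say $d$ and $d'$ with $\gcd(d,d')=1$; then by closure $S_{(r,k)}$ contains the numerical semigroup $\langle d,d'\rangle$, whose complement is finite by \eqref{eqcond}, hence the complement of $S_{(r,k)}$ is finite as well.

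The main obstacle is the third step: producing explicit configurations, and in particular two with coprime associated integers $d$. One clean source is the following: for any $r,k\ge 2$ one can build a combinatorial $(r,k)$-configuration from a sufficiently large structure — for instance, take a large projective plane or an explicit "generic" construction (a bipartite incidence graph of girth at least $6$ that is $r$-regular on one side and $k$-regular on the other), which is known to exist for all $r,k$ once $v,b$ are large enough and satisfy the numerology $vr=bk$ together with the inequalities of Lemma~\ref{thm:neccond}. Given any single $d_0\in S_{(r,k)}$, closure under addition already gives all sufficiently large multiples of $d_0$; to break out of the arithmetic progression $d_0\mathbb{N}$ and obtain cofiniteness one genuinely needs a second element of $S_{(r,k)}$ not divisible by the same primes, so some care is needed to construct configurations of two "arithmetically independent" sizes. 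I would handle this by showing that $S_{(r,k)}$ contains all integers $d$ above some bound (via a flexible incidence-graph construction), which immediately yields cofiniteness and in fact is stronger than what Lemma~\ref{thm:neccond} alone provides; alternatively, cite the relevant existence theorem for configurations with prescribed large parameters and verify the coprimality of two convenient choices by hand.
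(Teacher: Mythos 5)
First, note that the paper does not actually prove Theorem~\ref{main}: it is quoted from \cite{BrasStokes}. So your proposal can only be measured against the standard argument and against the machinery this paper develops in its later sections. Your closure step is correct and complete, and it is the standard one: the disjoint union of two $(r,k)$-configurations is again an $(r,k)$-configuration (the local counts $r$ and $k$ are unchanged, and two points in different components lie on no common line), and the associated integers add because $v_1+v_2=(d_1+d_2)\frac{k}{\gcd(r,k)}$ and $b_1+b_2=(d_1+d_2)\frac{r}{\gcd(r,k)}$. The remark about $0$ is a non-issue.

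The genuine gap is in the cofiniteness step, and you have correctly diagnosed where it lies. Disjoint unions starting from a single construction never leave a fixed arithmetic progression, and the one explicit family available (the affine-plane construction of Section 3) yields only the elements $q\gcd(r,k)$ for prime powers $q\geq\max(r,k)$ --- all multiples of $\gcd(r,k)$. So when $\gcd(r,k)>1$ the two coprime elements you want cannot come from that source, and your fallback --- that $r,k$-biregular bipartite graphs of girth at least $6$ exist for \emph{all} sufficiently large $v,b$ with $vr=bk$ --- is precisely the nontrivial content of the theorem; asserting it as ``known'' with no precise statement or reference leaves the proof incomplete. The paper's own way around this is Theorem~\ref{thmpatterns}: a surgery gluing two configurations after deleting $nk/\gcd(r,k)$ collinear points from one and $nr/\gcd(r,k)$ concurrent lines from the other shows that $d_1+d_2-n\in S_{(r,k)}$ for every $n\in[1,\gcd(r,k)]$. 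Starting from the single element $m=q\gcd(r,k)$, this produces the intervals $[(x+1)m-x\gcd(r,k),(x+1)m]$, which eventually overlap; that is exactly the cofiniteness argument, made quantitative in the conductor bound of Theorem~\ref{thm12}. Replacing your appeal to an unspecified existence theorem by either this pattern construction or a precisely cited existence result for configurations with all large admissible parameters would close the gap.
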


\section{Example: numerical semigroups associated to balanced configurations}
For balanced configurations, i.e. when $r=k$, the number of points $v$ equals the number of lines $b$, and also the associated integer $d$, so that $d=v=b$. 
In \cite{Grunbaum}, two ways to combine two balanced combinatorial configurations in order to construct a single larger one are described. 
Both constructions let the point set of the new configuration consist of the union of the point sets of the two original configurations, 
with the exception of removing or adding one point, respectively.
Since the number of points equals the associated integer, we get that 
$$d_1,d_2\in S_{(r,r)} \Rightarrow d_1+d_2-1$$
and 
$$d_1,d_2\in S_{(r,r)} \Rightarrow d_1+d_2+1.$$
As a consequence, given an element $d\in S_{(r,r)}$ we have that $2d-1,2d,2d+1\in S_{(r,r)}$. 
In particular, since $d$ and $2d-1$ are coprime, given a non-zero element in $S_{(r,r)}$, this is enough to prove that the complement of $S_{r,r}$ in $\mathbb{N}\cup \{0\}$  is finite. 

From results on the existence of cyclic configurations, difference sets and Golomb rulers, 
the existence of $(d,r,r)$-configurations can be confirmed for many values of $d$.  
Some results on non-existence can be deduced as a consequence of a Theorem of Bose and Connor.
In the table below, the numbers that appear are confirmed to belong to the numerical semigroup, the crossed-out numbers are confirmed to not belong to the numerical semigroup and the numbers within question-marks  are not confirmed. 
$$\begin{array}{|c|c|ccccccccc|}
\hline
r=k&\pi&S_{(r,k)}&\setminus\{0\}&&&&&&&\\
\hline
3&7&7&\rightarrow&&&&&&&\\
4&13&13&\rightarrow&&&&&&&\\
5&21&21&\cancel{22}&23&\rightarrow&&&&&\\
6&31&31&\cancel{32}&\cancel{33}&34&\rightarrow&&&&\\
7&43&\cancel{43}&\cancel{44}&45&?46?&?47?&48&\rightarrow&&\\
8&57&57&\cancel{58}&?59?&?60?&?61?&?62?&63&\rightarrow&\\
9&73&73&\cancel{74}&?75?&?76?&?77?&78&?79?&80\rightarrow&\\
\hline
\end{array}$$
From Lemma~\ref{thm:neccond} it is easy to see that a lower bound for the multiplicity $m$ of the numerical semigroup $S_{(r,r)}$ is $m\geq r^2-r+1$. 
This bound is attained if and only if there exists a finite projective plane of order $r-1$. 
We denote $P(r)=r^2-r+1$.

A \emph{Golomb ruler} $G_r$ of order $r$ is an ordered set of $r$ integers $a_1, a_2,\dots,a_r$ such that
$0 \leq a_1 < a_2 <\dots < a_r$ and all the differences $\{a_i-a_j : 1 \leq j < i \leq r\}$ are distinct.
The length $L_G(r)$ of the ruler $G_r$ is equal to $a_r-a_1$. 
We denote by $L_{\bar{G}}(r)$ the length of the shortest known Golomb ruler of order $r$.

It can be proved that for all $v$ such that $v \geq  2L_{\bar{G}}(r) + 1$, there exists a (cyclic) balanced combinatorial configuration with parameters $(v,v,r,r)$ (see \cite{Gropp}).
Therefore an upper bound of the conductor $c$ of $S_{(r,r)}$ is  $c\leq 2L_{\bar{G}}(r) + 1$. 
Following \cite{Davydov} we call this the Golomb bound and denote it by  $G(r)= 2L_{\bar{G}}(r) + 1$. 
It is obvious that $P(r)\leq G(r)$. 
According to \cite{Davydov}, for $r\in[10,\dots,37]$,  for a percentatge of between $35\%$ ($r=10$) and $89\%$ ($r=16$) of the integers between $P(r)$ and $G(r)$, it is known whether they belong to $S_{(r,r)}$ or not.


%


\section{Numerical semigroups associated to configurations with coprime parameters}
We can always construct an $(r,k)$-configuration, for any choice of parameters $(r,k)$.
Indeed, let $q\geq \max(r,k)$ and let $AG(2,q)$ be the finite affine plane over the finite field with $q$ elements. 
It has $q^2$ points and $q^2+q$ lines. 
There are $q$ points on every line and $q+1$ lines go through every point. 

We say that two lines are parallel if they do not intersect in any point. 
The lines in $AG(2,q)$ can be partitioned into $q+1$ classes of parallel lines of $q$ lines each, so that for every point there is exactly one line from every class that goes through that point. 

Consider the incidence structure constructed by taking the lines from $r$ parallel classes of $AG(2,q)$ and restrict these to the points located on $k$ of the lines of an additional parallel class of lines. 
It is easy to see that this incidence structure is an $(r,k)$-configuration and that it has $kq$ points and $rq$ lines. 
The associated integer to this $(r,k)$-configuration is therefore $kq\frac{\gcd(r,k)}{k}=q\gcd(r,k)$. 

This construction works whenever $q\geq \max(r,k)$. 
Finite affine planes are known to exist if $q$ is a prime power. 
Indeed, there is a finite affine plane for every finite field. 
There are also other finite affine planes, for non-prime orders. 
However, it is not known if there exist finite affine planes of order that is not a prime power.
As a consequence, we get the following result. 

\begin{lemma}
If $\gcd(r,k)=1$, then any prime power $q\geq \max(r,k)$ belongs to $S_{(r,k)}$. 
\end{lemma}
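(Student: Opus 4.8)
The plan is to read off the conclusion directly from the explicit construction carried out in the paragraphs immediately preceding the statement. Recall that we have already exhibited, for any $q\geq\max(r,k)$, an $(r,k)$-configuration obtained from the affine plane $AG(2,q)$: take the lines belonging to $r$ of the $q+1$ parallel classes, and restrict attention to the points lying on $k$ of the lines of one further parallel class. The verification that this incidence structure is indeed an $(r,k)$-configuration was sketched there, and it was computed that it has $kq$ points and $rq$ lines, hence associated integer $q\gcd(r,k)$.

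First I would invoke the hypothesis $\gcd(r,k)=1$, which simplifies the associated integer of the construction from $q\gcd(r,k)$ to exactly $q$. Next I would note that the only input the construction needs is the existence of a finite affine plane of order $q$ with $q\geq\max(r,k)$, and that such a plane exists whenever $q$ is a prime power, since $AG(2,\mathbb{F}_q)$ does the job for every finite field $\mathbb{F}_q$. Finally I would check that $q$ sits in $S_{(r,k)}$ according to the definition: with $\gcd(r,k)=1$ the required number of points is $d\frac{k}{\gcd(r,k)}=qk$ and the required number of lines is $d\frac{r}{\gcd(r,k)}=qr$, which are precisely the point count $kq$ and line count $rq$ of the configuration just built. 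Hence $q\in S_{(r,k)}$.

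There is essentially no obstacle here: the statement is a corollary of the preceding construction together with the elementary observation $\gcd(r,k)=1$, and the only point requiring a moment's care is matching the normalization in the definition of $S_{(r,k)}$ (the factors $k/\gcd(r,k)$ and $r/\gcd(r,k)$) against the raw point and line counts of the affine construction — which is immediate once $\gcd(r,k)=1$ is used. One might optionally remark that since the construction needs $q\geq\max(r,k)$ it also requires $q\geq 2$, but this is automatic for any prime power with $q\geq\max(r,k)$ when $r,k\geq 2$.
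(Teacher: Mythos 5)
Your proposal is correct and follows exactly the paper's argument: the lemma is stated as an immediate consequence of the affine-plane construction described in the preceding paragraphs, which yields an $(r,k)$-configuration with associated integer $q\gcd(r,k)=q$ for every prime power $q\geq\max(r,k)$. Your extra care in matching the point and line counts against the normalization in the definition of $S_{(r,k)}$ is a welcome bit of explicitness that the paper leaves implicit.
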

Numerical semigroups that are generated by prime powers have according to our knowledge not been previously treated in the literature. 
We present the following upper bounds for this type of numerical semigroups. 
\begin{theorem}\label{thm9}
Let $c$ be the conductor of a numerical semigroup that contains all prime powers larger than or equal to a given integer $n$. 
Then this conductor satisfies
$$c\leq 2\prod_{p~prime,~p<n}(\lfloor\log_p(n-1)\rfloor+1).$$
\end{theorem}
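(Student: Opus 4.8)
The key observation is that the set $T$ of prime powers $\geq n$ generates a numerical semigroup $S$, and we want to bound its conductor. By Lemma~\ref{1:fita:conductorboundgenus} we have $c\leq 2g$, where $g$ is the genus, so it suffices to exhibit a set of integers, all lying in $S$, whose complement in $\mathbb{N}\cup\{0\}$ has size at most $\prod_{p<n}(\lfloor\log_p(n-1)\rfloor+1)$. Equivalently, I want to show that every sufficiently large integer is a sum of prime powers each $\geq n$, and count (an upper bound for) the exceptions.

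**Main step.** The natural idea: among the generators $\geq n$ are two coprime ones — take two distinct primes $p_1,p_2\geq n$ (or a prime $\geq n$ and another prime power $\geq n$ coprime to it); these exist, and by \eqref{eqcond} the subsemigroup $\langle p_1,p_2\rangle$ already has finite complement with genus $(p_1-1)(p_2-1)/2$. But that bound is far too weak. The real point must be that we are allowed to use *all* prime powers $\geq n$, and the product $\prod_{p<n}(\lfloor\log_p(n-1)\rfloor+1)$ is exactly the number of integers in $\{1,\dots,n-1\}$ that are "$n$-smooth-free" in a suitable sense — in fact it counts the integers all of whose prime-power components are $<n$, i.e. integers $m$ such that for every prime $p$, the exact power $p^{a}\parallel m$ satisfies $p^a<n$ (equivalently $a\le \log_p(n-1)$, giving $a\in\{0,1,\dots,\lfloor\log_p(n-1)\rfloor\}$, hence $\lfloor\log_p(n-1)\rfloor+1$ choices for each $p<n$, and $a=0$ forced for $p\ge n$). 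So the claim I would prove is:

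\emph{Claim.} If $m\geq 1$ has a prime-power divisor $p^a \geq n$ (with $p^a$ the full $p$-part of $m$), then $m\in S$; consequently every $m$ not in $S$ lies among the at most $\prod_{p<n}(\lfloor\log_p(n-1)\rfloor+1)$ "small-component" integers, whence $g\le \prod_{p<n}(\lfloor\log_p(n-1)\rfloor+1)$ and $c\le 2g$ by Lemma~\ref{1:fita:conductorboundgenus}.

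**Proving the Claim.** Write $m = p^{a} u$ with $\gcd(p,u)=1$ and $p^{a}\geq n$. I want to write $m$ as a nonnegative combination of prime powers $\geq n$. Since $p^{a}\ge n$ we have $p^{a}\in T$. If $u=1$ we are done. Otherwise I would argue by strong induction on $m$ (or on the number of prime factors of $u$): peel off enough copies of $p^{a}$ to reduce $u$. Concretely, for each prime $q\mid u$ with $q^b\parallel m$ and $q^b<n$: note $q^b < n \le p^a$, and I want to "promote" $q^b$ to $q^c$ with $q^c\ge n$ by adding multiples; the mechanism is that $m = q^b w$ and I can write $m$ as a combination once I know $q^{b}w$ splits. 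Cleanest: induct on the number of prime factors $p<n$ dividing $m$ to their full recorded power $<n$; the base case is when there are none, i.e. every prime-power component of $m$ is $\ge n$, and then $m$ is itself (a product, but we need a sum — so the induction must instead reduce $m$ additively).

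**Where the real difficulty lies.** The honest obstacle is turning a *multiplicative* fact ($m$ has a large prime-power factor) into an *additive* decomposition ($m = \sum (\text{prime powers}\ge n)$). The right tool is: if $d\mid m$ and $d\ge n$ and $m/d\ge $ (conductor of $\langle$ prime powers $\ge n$ that are $\equiv$-compatible$\rangle$)... — this is circular. So instead I expect the paper uses the elementary fact that for the prime power $p^{a}\ge n$ dividing $m$, one has $m = p^{a}\cdot(m/p^{a})$, and $m/p^{a} < m$, together with: *any* integer $t\ge 1$ times $p^{a}$ can be written using $p^{a}$ and one other suitable prime power $q^{c}\ge n$ coprime to $p$, because $tp^{a}$ for $t$ large is $\ge (p^{a}-1)(q^{c}-1)$ hence in $\langle p^{a}, q^{c}\rangle$, and the residual finitely many small $t$ are handled by noting $tp^{a}$ itself has... — in short the combinatorial core is a careful induction peeling off one prime at a time, and the bookkeeping to show the only survivors are the $\prod_{p<n}(\lfloor\log_p(n-1)\rfloor+1)$ integers with all components small is the part requiring care. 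I would write that induction explicitly, handle the $u=1$ base case, and at each step replace a factor $q^{b}<n$ by absorbing it into a larger prime power, checking at the end that the uncovered set is exactly the claimed product, then invoke $c\le 2g$.
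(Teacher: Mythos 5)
Your reduction is the right one, and it is exactly the paper's: by Lemma~\ref{1:fita:conductorboundgenus} it suffices to bound the genus, and the gaps are contained in the set of integers all of whose prime-power components are $<n$, a set of size at most $\prod_{p<n}(\lfloor\log_p(n-1)\rfloor+1)$. The problem is that your proof of the Claim stalls at a step you describe as ``the real difficulty'' and ``the honest obstacle,'' and you never actually complete it. That step is not an obstacle at all. The theorem concerns a numerical semigroup $\Lambda$ that \emph{contains} all prime powers $\ge n$; since $\Lambda$ is closed under addition, every positive integer multiple of an element of $\Lambda$ is again in $\Lambda$, being a repeated sum of that element. So if the full $p$-part $p^a$ of $m$ satisfies $p^a\ge n$, then $m=(m/p^a)\cdot p^a=p^a+\dots+p^a\in\Lambda$, and $m$ is not a gap. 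That is the entire proof of the Claim. There is no need to express $m$ as a combination of distinct or coprime prime powers, no role for the two-generator conductor formula \eqref{eqcond}, and no induction ``peeling off one prime at a time'': the multiplicative fact converts to an additive one trivially because the coefficient $m/p^a$ in a nonnegative combination is allowed to be any positive integer.

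As submitted, the argument is therefore incomplete --- the central claim is asserted with a sketch of several candidate strategies (two coprime generators, strong induction on the number of prime factors, absorbing small components into larger ones), none of which is carried out, and all of which are unnecessary. Replace that discussion with the one-line observation above and your proof coincides with the paper's.
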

\begin{proof}
In Lemma~\ref{1:fita:conductorboundgenus} we saw that the conductor of a numerical semigroup is smaller or equal to two times the genus. 

Suppose that $\Lambda$ is a numerical semigroup that contains all prime powers larger
than or equal to a given integer $n$. 
We want to estimate the genus of $\Lambda$.
Then any gap $x$ can be expressed as a
product 
$$x=p_1^{n_1}\cdots p_k^{n_k}$$
with $n_i$ integers such that $1\leq n_i\leq \log_{p_i}(n-1)$ for all $i$. 
In particular $p_1,\dots,p_k$ are prime numbers smaller than $n$. 

Indeed, decompose $x$ as a product of powers of different primes $x = p_1^{n_1}\cdots p_k^{n_k}$. 
If $n_i > \log_{p_i}(n-1)$ for some $i$ then $p_i^{n_i}$ is a prime power larger than or
equal to $n$ and so it belongs to $\Lambda$ and so does any multiple of it, like $x$.

Therefore the genus, that is, the number of gaps of $\Lambda$, is at most 
$$\prod_{p~prime,~p<n}(\lfloor\log_{p}(n-1)\rfloor+1),$$
so that the conductor of $\Lambda$ is at most 
$$2\prod_{p~prime,~p<n}(\lfloor\log_{p}(n-1)\rfloor+1).$$

\end{proof}

\section{Linear non-homogeneous patterns} 

A pattern of length $n$ admitted by a numerical semigroup $S$ is a polynomial  $p(X_1,\dots,X_n)$ with non-zero integer coefficients,  such that, for every ordered sequence of $n$ elements $s_1\geq \dots\geq s_n$ from $S$, we have $p(s_1,s_2,\dots,s_n)\in S$. 
\begin{example}
Let $S$ be a numerical semigroup such that for every triple $s_1\geq s_2 \geq s_3$ in $S$ we have $s_1+s_2-s_3\in S$. Then the polynomial $X_1+X_2-X_3$ is a pattern for $S$.  
\end{example} 
A pattern is called linear, homogenous or symmetric  if the pattern polynomial is linear, homogenous or symmetric. 
Linear, homogenous patterns were first introduced and studied in \cite{MariaPedro}. 
Linear, non-homogeneous patterns have recently been studied in \cite{AlbertMaria}. 

\begin{theorem}\label{thmpatterns}
Let $S_{(r,k)}$ be a numerical semigroup associated to the $(r,k)$-configurations. 
Then $S_{(r,k)}$ admits the pattern  $$X_1+X_2-n$$ for all $n\in [1,\dots,\gcd(r,k)]$. 
\end{theorem}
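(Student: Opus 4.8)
The plan is to show directly that if $s_1 \geq s_2$ are elements of $S_{(r,k)}$ and $1 \leq n \leq \gcd(r,k)$, then $s_1 + s_2 - n \in S_{(r,k)}$. Write $g = \gcd(r,k)$, $r = g r'$, $k = g k'$ with $\gcd(r',k')=1$. By definition of $S_{(r,k)}$, there exist combinatorial $(r,k)$-configurations $C_1$ and $C_2$ with $v_i = s_i k'$ points and $b_i = s_i r'$ lines, for $i=1,2$. The goal is to manufacture, from $C_1$ and $C_2$, a new $(r,k)$-configuration whose associated integer is exactly $s_1 + s_2 - n$; that is, one with $(s_1+s_2-n)k'$ points and $(s_1+s_2-n)r'$ lines. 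Since $s_1+s_2-n \geq s_2 \geq$ any element forces nothing about finiteness (that is already guaranteed by Theorem~\ref{main}), the only task is the construction.

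The key step is a ``merging along a partial common substructure'' argument, generalizing the two balanced-configuration constructions recalled in Section~2 (which are exactly the $n=1$, $r=k$ case). First I would take the disjoint union of the point sets and line sets of $C_1$ and $C_2$; this gives an incidence structure in which $r$ lines pass through each point and $k$ points lie on each line, but it is typically not a configuration only in the trivial sense that it is disconnected — the ``at most one line through two points'' condition is automatically preserved under disjoint union. So the disjoint union is already a valid $(r,k)$-configuration with associated integer $s_1 + s_2$. This handles $n$ values by then performing $n$ successive ``point identifications'': I would identify a point $p$ of $C_1$ with a point $p$ of $C_2$, which requires simultaneously pairing up the $r$ lines through $p$ in $C_1$ with the $r$ lines through $p$ in $C_2$ and identifying each such pair of lines. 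Identifying two lines $\ell \in C_1$, $\ell' \in C_2$ in turn forces us to identify the $k-1$ remaining points of $\ell$ with the $k-1$ remaining points of $\ell'$ — so the identifications cascade. The heart of the matter is to choose the identifications so that the cascade closes up consistently (identifying a bounded, controlled set of points and lines) and so that no forbidden configuration — two points joined by two distinct lines — is created.

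The main obstacle, and the place where the hypothesis $n \leq \gcd(r,k)$ must enter, is controlling this cascade: I expect that identifying a single point forces identification of a whole ``connected neighborhood,'' which changes the point count by more than one, so a naive single-point identification will not give $s_1 + s_2 - 1$. Instead I would look for a small common sub-configuration — for instance a single line together with its $k$ points, or more precisely a structure governed by the number $g = \gcd(r,k)$ — that can be glued. Concretely: $g$ measures the ``granularity'' with which points and lines must be removed/added together to stay in $S_{(r,k)}$ (recall $v = d k/g$, $b = d r/g$, so $d$ changes by $1$ exactly when $v$ changes by $k/g$ and $b$ by $r/g$). I would therefore aim to excise from $C_1$ a small piece consisting of $k/g$ points and $r/g$ lines whose removal leaves a valid ``near-configuration with boundary,'' do the same in $C_2$, and glue the two boundaries; repeating this $n$ times (possible since $n \leq g$ and the pieces are ``independent enough'') removes $n(k/g)$ points and $n(r/g)$ lines total relative to the disjoint union, yielding associated integer $s_1 + s_2 - n$. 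Verifying that such a removable/gluable piece exists inside every $(r,k)$-configuration — using only the defining axioms, perhaps via a counting or averaging argument on incidences, and checking the ``no double line'' condition survives the gluing — is the real content; once the piece and the gluing are in hand, the count of points and lines is immediate and membership in $S_{(r,k)}$ follows by definition.
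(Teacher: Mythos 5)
Your proposal correctly identifies the target (a configuration with $(s_1+s_2-n)\,k/\gcd(r,k)$ points and $(s_1+s_2-n)\,r/\gcd(r,k)$ lines, built by surgery on two given configurations) and correctly locates where the difficulty lies, but it does not close the gap: you explicitly defer the ``real content'' --- exhibiting the removable piece and verifying that the gluing preserves the configuration axioms --- and that deferred step is essentially the whole theorem. As written, the argument establishes only that \emph{if} a suitable excision-and-gluing exists then the count comes out right; it gives no construction. Moreover, the mechanism you sketch (a symmetric gluing of two boundaries, driven by cascading point identifications) is not the one that works, and the cascade you worry about is a genuine obstruction to that symmetric approach: identifying a point forces identifying lines, which forces identifying more points, and there is no reason this closes up after removing exactly $nk/\gcd(r,k)$ points.

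The missing idea is that the surgery is \emph{asymmetric}: one removes only points from the first configuration and only lines from the second. Concretely, the paper removes $a=nk/\gcd(r,k)$ points all lying on a single line $L$ of $A$, and $b=nr/\gcd(r,k)$ lines all passing through a single point $p$ of $B$; the hypothesis $n\leq\gcd(r,k)$ is used precisely here, to guarantee $a\leq k$ and $b\leq r$, i.e.\ that one line of $A$ carries enough points and one point of $B$ carries enough lines to absorb the entire removal. One then repairs the deficient line $L$ using $p$ and points of $B$ that lay on the deleted lines, restores the $r$ lines through $p$ using lines of $A$ that passed through the deleted points, and distributes the remaining orphaned points of $B$ among the remaining deficient lines of $A$. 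This cross-wiring avoids any identification cascade because no two objects of the same configuration are ever merged; the point and line degrees are restored by construction, and the ``at most one line through two points'' condition needs only a local check. Your averaging/counting search for a removable sub-configuration is not needed --- the piece is simply a set of collinear points on one side and a pencil of concurrent lines on the other, which exist in every $(r,k)$-configuration by definition.
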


\begin{proof}
Take two $(r,k)$-configurations $A$ and $B$ with associated integers $d_A$ and $d_B$. 
Then $A$ has $v_A=d_A\frac{k}{\gcd(r,k)}$ points and $b_A=d_A\frac{r}{\gcd(r,k)}$ lines, while $B$ has $v_B=d_B\frac{k}{\gcd(r,k)}$ points and $b_B=d_B\frac{r}{\gcd(r,k)}$ lines. 

Remove $a:=nk/\gcd(r,k)$ points $p_1,\dots,p_a$  on a line $L$ in $A$. 
Also remove $b:=nr/\gcd(r,k)$ lines $l_1,\dots,l_b$ through a point $p$ in $B$. 
The line $L$ is now missing $nk/\gcd(r,k)$ points. 
The $\left(nk/\gcd(r,k)\right)(r-1)$ lines that previously went through the removed points are now missing one point each. 

Replace the missing points on $L$ with $p$ together with $nk/\gcd(r,k)-1$ other points that previously were on $l_1$.  
There are now missing $nr/\gcd(r,k)-1$ lines through $p$. 
Replace these by letting the lines that previously went through $p_1$ now go through $p$. 
Replace the rest of the missing points on the lines which previously went through $p_1,\dots,p_a$  with the points in $B$ that previously were on  $l_1,\dots,l_b$, until there are $r$ lines going through all these points. 

It is easy to check that the resulting incidence structure is an $(r,k)$-configuration with 
$$v=v_A+v_B-a=d_A\frac{k}{\gcd(r,k)}+d_B\frac{k}{\gcd(r,k)}-\frac{nk}{\gcd(r,k)}=(d_A+d_B-n)\frac{k}{\gcd(r,k)},$$
so that the  associated integer is $d_A+d_B-n$. 
\end{proof}
Figure~\ref{fig:2} shows an example of the construction used in Theorem~\ref{thmpatterns} for $r=3$ and $k=5$. In this case $\gcd(r,k)=1$, so there is only one choice, $n=1$.
\begin{figure}
\begin{center}
  \includegraphics[width=0.90\textwidth]{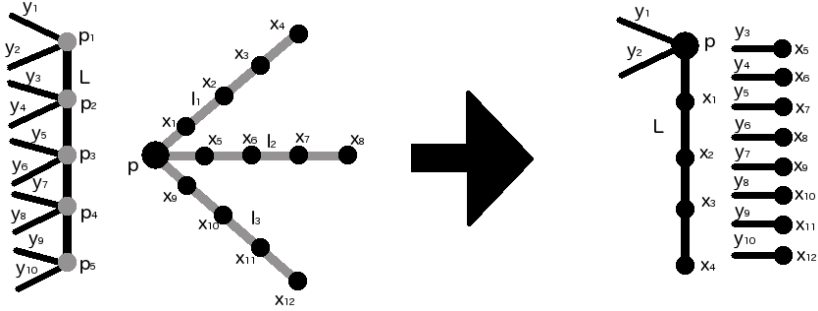}
\end{center}
\caption{Theorem~\ref{thmpatterns} for $r=3$, $k=5$ and $n=1$. The grey points and lines in the two combinatorial configurations on the left are removed and the resulting configuration is shown on the right.}
\label{fig:2}
\end{figure} 

Observe that the patterns in Theorem~\ref{thmpatterns} are linear and non-homogeneous, and also that they  are symmetric.
From the existence of these patterns, the following upper bound for the conductor of $S_{(r,k)}$ can be obtained. 



\begin{theorem}\label{thm12}
The conductor $c$ of a numerical semigroup $S_{(r,k)}$ associated to the $(r,k)$-configurations is bounded by
$$c\leq (x+1)m-x\gcd(r,k)$$
where $m$ is the multiplicity of $S_{(r,k)}$ and 
$x=\left\lfloor\frac{m-2}{\gcd(r,k)}\right\rfloor$.
\end{theorem}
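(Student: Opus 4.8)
The plan is to bootstrap the symmetric pattern of Theorem~\ref{thmpatterns}, starting from the multiplicity, to build ever longer blocks of consecutive elements of $S_{(r,k)}$ and to run the process until these blocks overlap. Throughout write $g=\gcd(r,k)$. Two preliminary observations will be used: first, $m\in S_{(r,k)}$, so $jm\in S_{(r,k)}$ for every $j\geq 1$; second, every element of $S_{(r,k)}$, and in particular $m$, is at least $g$, since by Lemma~\ref{thm:neccond} an associated integer $d$ satisfies $dk/g=v\geq r(k-1)+1$, whence $d\geq g\,(rk-r+1)/k\geq g$ because $(r-1)(k-1)\geq 0$.

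The first step is to prove, by induction on $j\geq 1$, that $S_{(r,k)}$ contains the whole integer interval
$$I_j:=\bigl\{\,jm-(j-1)g,\ jm-(j-1)g+1,\ \dots,\ jm\,\bigr\}.$$
The base case $j=1$ is just the statement $m\in S_{(r,k)}$. For the inductive step, assume $I_j\subseteq S_{(r,k)}$. Since $m\geq g$, every $t\in I_j$ satisfies $t\geq m$, so Theorem~\ref{thmpatterns} applied to the pair $t\geq m$ gives $t+m-n\in S_{(r,k)}$ for all $n\in\{1,\dots,g\}$. As $t$ ranges over the integer interval $I_j$ and $n$ over $\{1,\dots,g\}$, the differences $t-n$ sweep out the integer interval $\{jm-jg,\dots,jm-1\}$, so the numbers $t+m-n$ sweep out $\{(j+1)m-jg,\dots,(j+1)m-1\}$; adjoining $(j+1)m\in S_{(r,k)}$ yields $I_{j+1}\subseteq S_{(r,k)}$, completing the induction.

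The second step is to determine when consecutive blocks become contiguous. The largest element of $I_j$ is $jm$ and the smallest element of $I_{j+1}$ is $(j+1)m-jg$, and there is an integer strictly between them precisely when $jm+1\leq (j+1)m-jg-1$, i.e. when $jg\leq m-2$. Hence, setting $x=\lfloor (m-2)/g\rfloor$, for every $j\geq x+1$ the block $I_{j+1}$ begins at most one unit past the end of $I_j$, so $\bigcup_{j\geq x+1}I_j$ is the full ray of integers $\geq (x+1)m-xg$. Since this ray lies in $S_{(r,k)}$, we conclude $c\leq (x+1)m-x\gcd(r,k)$.

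Once Theorem~\ref{thmpatterns} is in hand the argument is mostly bookkeeping; the two places where some care is needed are checking that $m\geq\gcd(r,k)$ (so that the pattern really does apply to every element produced along the way, which is where Lemma~\ref{thm:neccond} enters) and pinning down the exact merging threshold $x=\lfloor (m-2)/g\rfloor$ so as to avoid an off-by-one error in the final bound.
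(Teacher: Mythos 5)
Your proof is correct and follows essentially the same route as the paper's: use the pattern $X_1+X_2-n$ of Theorem~\ref{thmpatterns} to generate the integer intervals $[(j+1)m-jg,(j+1)m]$ inside $S_{(r,k)}$ and determine the threshold $x=\lfloor(m-2)/g\rfloor$ at which consecutive intervals merge into a ray. In fact your write-up is more complete than the paper's, since you make explicit the induction producing the intervals and the verification $m\geq\gcd(r,k)$ needed for the pattern to keep applying, both of which the paper passes over.
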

\begin{proof}
If $d\in S_{(r,k)}$ then $2d-n\in S_{(r,k)}$ for $n\in [1,\gcd(r,k)]$.
Therefore  the intervals $I_x=[(x+1)d-x\gcd(r,k)),(x+1)d]$ belong to $S_{(r,k)}$ for $d\in S_{(r,k)}$. 
If there is a gap between $I_x$ and $I_{x-1}$, then $(x+1)d-x\gcd(r,k)>xd+1$, so that $x<\frac{d-1}{\gcd(r,k)}$. 
Hence, the largest $x$ for which $I_x$ and $I_{x-1}$ are separated by at least one gap is at most $\left\lfloor\frac{d-2}{\gcd(r,k)}\right\rfloor$. 
So, the conductor must be at least the first element of $I_x$ with $x= \left\lfloor\frac{d-2}{\gcd(r,k)}\right\rfloor$ and this is exactly $(x + 1)d − x gcd(r, k))$. 
The minimality of the multiplicity suggests then substituting $d$ by $m$.
\end{proof}

In order to compare the different bounds that have been presented in this article, we give the values for these bounds for balanced configurations of small parameters in Table~\ref{tab1} and configurations with small coprime parameters in Table~\ref{tab2}. 
From Table~\ref{tab1} it is clear that the bound from Theorem~\ref{thm12} in this case is far from being sharp. 
The calculations of the bound from Theorem~\ref{thm12} were seeded with the upper bound for the multiplicity $q\gcd(r,k)$, where $q$ is the smallest prime power larger than $\max(r,k)$. 
If the real multiplicity is used, the bound from Theorem~\ref{thm12} will give better results. 
\begin{figure}
\begin{tabular}{|c|c|c|c|}
\hline
r&P(r)&G(r)&Theorem~\ref{thm12}\\
\hline
3 & 7  & 7  & 21 \\
4 & 13 & 13 & 52 \\
5 & 21 & 23 & 105\\
6 & 31 & 35 & 258\\
7 & 43 & 48 & 301\\
8 & 57 & 63 & 456\\
9 & 73 & 80 & 657\\
\hline
\end{tabular}
\caption{Bounds for balanced $(r,r)$-configurations}
\label{tab1}
\end{figure}
\begin{figure}
\begin{tabular}{|c|c|c|c|}
\hline
r&k&Theorem~\ref{thm9}&Theorem~\ref{thm12}\\
\hline
3&4&8&10\\
3&5&12&17\\
3&7&24&37\\
3&8&48&50\\
3&10&96&101\\
3&11&96&101\\
3&13&192&145\\
4&5&12&17\\
4&7&24&37\\
4&9&64&65\\
4&11&96&101\\
4&13&192&145\\
5&6&24&37\\
5&7&24&37\\
5&8&48&50\\
5&9&64&65\\
5&11&96&101\\
5&12&192&145\\
5&13&192&145\\
5&14&384&226\\
\hline
\end{tabular}
\caption{Bounds for $(r,k)$-configurations with $\gcd(r,k)=1$}
\label{tab2}
\end{figure}


\section*{Conclusions}
We have proved that the numerical semigroups attached to the existence of combinatorial configurations allow a family of linear, symmetric and non-homogenous patterns. We have also studied the numerical semigroups attached to the balanced combinatorial $(r,k)$-configurations ($r=k$) and to the combinatorial $(r,k)$-configurations for which $r$ and $k$ are coprime.

\section*{Acknowledgements}
The authors would like to thank an anonymous referee for the idea to construct combinatorial configurations from finite affine planes. 
Partial support by the Spanish MEC projects ARES
(CONSOLIDER INGENIO 2010 CSD2007-00004), RIPUP
(TIN2009-11689) and ICWT (TIN2012-32757), is acknowledged. 
The second author is with the UNESCO Chair in Data Privacy, 
but her views do not necessarily reflect those of UNESCO, nor commit that organization.  
\bibliographystyle{plain}

\end{document}